\theoremstyle{plain}
\newtheorem{theorem}{Theorem}
 \newtheorem{corollary}[theorem]{Corollary} 
 \newtheorem{proposition}[theorem]{Proposition}
\theoremstyle{definition}
\newtheorem{definition}[theorem]{Definition}
\newtheorem{example}[theorem]{Example}
\newtheorem{remark}[theorem]{Remark}
\begin{document}

\title{Integer Sequences and Output Arrays}

\author{John P. D'Angelo}

\address{ Dept. of Mathematics, Univ. of Illinois, 
1409 W. Green St., Urbana IL 61801, USA}
\email{ jpda@illinois.edu}

\author{Ji\v{r}\'{\i} Lebl}
\thanks{The second author was in part supported by Simons Foundation collaboration grant 710294.}
\address{Department of Mathematics, Oklahoma State University,
Stillwater, OK 74078, USA}
\email{lebl@okstate.edu}

\maketitle

\begin{abstract} The first author \cite{D1} recently  introduced an integer sequence now numbered A355519 in \cite{OEIS}.
This sequence arose from counting bracket tournaments; its study evokes the analysis of the Catalan triangle 
(sequence A009766 in \cite{OEIS}) and the related Catalan numbers, sequence A000108 in \cite{OEIS}. We therefore introduce a general construction that
places these sequences on the same footing and suggests many new integer sequences. We provide code for performing this construction and a lengthy list of examples.
This construction determines a function from input sequences to output sequences. Some of the resulting output sequences are in \cite{OEIS} and others are not.

\medskip

\noindent {\bf AMS Classification Numbers}:  Primary 11B83, Secondary 11B37.

\medskip

\noindent {\bf Key words}: Integer sequence, Catalan number, bracket tournament, combinatorial polynomial, recurrence relation.
\end{abstract}


\section{Introduction}

The starting point of this paper is an arbitrary non-decreasing sequence of positive integers, called an {\it input sequence}. Given this sequence $(y_n)_{n\ge 1}$, we use it to define an infinite system of inequalities. We use these inequalities to introduce the notion of a valid $n$-tuple. We study the number of valid $n$-tuples via an output array ${\mathcal A} = A(n,k)$ determined by the given sequence.
Here $n$ denotes the row and $k$ denotes the column; $n\ge 1$ and $k\ge 0$.
The entry $A(n,k)$ is the number of valid $n$-tuples with first entry $k$.
In Theorem \ref{recurrence-theorem}  we show that entries in $A(n,k)$ (for appropriate $k$) satisfy the recurrence
\begin{equation} \label{recurrence} A(n+1,k+1) = A(n+1,k) + A(n,k+1). \end{equation}
Thus an entry is the sum of the entry above it and the entry to its left in the array.
Sums of the entries in a row of the output array often define interesting integer sequences, such as binomial coefficients, Catalan numbers, and the number of valid tournament brackets.
We call the sequence of these row sums the {\it output sequence}. The columns of the output array are also interesting;
each column defines a combinatorial polynomial. See Theorem \ref{array-formula}  and Propositions \ref{poly-bracket}, \ref{Cat-polynomial}, and \ref{Fib-polynomial}.

Perhaps the two most interesting examples are the well-known Catalan triangle (see for example \cite{West} and \cite{Stanley})  and the array arising from tournament brackets in \cite{D1}. 
We introduce and study several additional examples, including the array arising from the Fibonacci sequence. It is surprising to the authors that the output sequence in this case
does not appear in \cite{OEIS}. In the final section of the paper we give a lengthy list of input sequences and their corresponding output sequences. 

Let $(y_n)_{n\ge 1}$ be an input sequence, namely a nondecreasing sequence of positive integers. 
For each positive integer $n$ we consider $n$-tuples of non-negative integers satisfying inequalities determined
from this sequence. We make the following definition.

\begin{definition} Let $(y_n)_{n\ge 1}$ be a nondecreasing sequence of positive integers.  For each positive integer $n$, we say that 
${\bf x}= (x_1,\ldots, x_n)$ is a {\bf valid} $n$-tuple for $(y_n)$ if the following hold:
\begin{itemize}
\item $x_1 \le y_n$,
\item for $ j\ge 1 $, we have  $x_{j+1} \le \min( x_j, y_{n-j})$. 
\end{itemize}
\end{definition} 

Notice that the entries in ${\bf x}$ are nonincreasing.  We next define {\bf output arrays}, the main construction in this paper.

\begin{definition}  Fix an input sequence $(y_n)$. For each positive integer $n$ and each non-negative integer $k$, define  $A(n,k)$ to be the number of valid $n$-tuples with $x_1=k$. 
The output array ${\mathcal A}$ is defined by ${\mathcal A}= \left(A(n,k)\right)$. \end{definition}

For the constant sequence with $y_n=j$, an $n$-tuple is valid if we have
\begin{equation} x_n \le x_{n-1} \le \dots \le x_1 \le j. \end{equation}
When $j=1$, there are $j+1$ valid $n$-tuples. When $j=2$, there are 
\begin{equation} 1+2 + \cdots + n = {n+2 \choose 2} \end{equation}
valid $n$-tuples. In fact we have the following well-known result.

\begin{proposition} Fix a positive integer $j$. For each positive integer $n$ there are ${n+j \choose j}$ $n$-tuples ${\bf x}$ of non-negative integers  satisfying
\begin{equation} x_n \le x_{n-1} \le \ldots \le x_1 \le j. \end{equation} 
\end{proposition}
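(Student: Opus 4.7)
The plan is to recognize this as the classical count of weakly decreasing sequences of length $n$ with values in $\{0,1,\ldots,j\}$, equivalently multisets of size $n$ drawn from a $(j+1)$-element set, and to prove it by a direct bijection with compositions of $j$ into $n+1$ non-negative parts.

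Concretely, given a valid $n$-tuple $(x_1,\ldots,x_n)$ with $j \ge x_1 \ge x_2 \ge \cdots \ge x_n \ge 0$, I would set $d_0 = j - x_1$, $d_i = x_i - x_{i+1}$ for $1 \le i \le n-1$, and $d_n = x_n$. Each $d_i$ is a non-negative integer, and the sum telescopes to $d_0 + d_1 + \cdots + d_n = j$. Conversely, any non-negative integer solution $(d_0,\ldots,d_n)$ to $d_0 + \cdots + d_n = j$ recovers a unique valid tuple via $x_i = d_i + d_{i+1} + \cdots + d_n$. Once this bijection is verified, the standard stars-and-bars formula counts the $(d_0,\ldots,d_n)$ as $\binom{n+j}{n} = \binom{n+j}{j}$, which is the desired total.

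There is no substantial obstacle here; the only thing to check carefully is that the boundary differences $d_0$ and $d_n$ are handled correctly, so that the map and its inverse really are well defined and mutually inverse. If one prefers an inductive approach, one can instead induct on $n$: the base case $n=1$ gives $j+1 = \binom{1+j}{j}$ valid tuples, and for the inductive step one conditions on $x_1 = k$ for $k \in \{0,\ldots,j\}$, so that the remaining entries $(x_2,\ldots,x_n)$ form a weakly decreasing tuple in $\{0,\ldots,k\}$, contributing $\binom{(n-1)+k}{k}$ by the inductive hypothesis; summing and applying the hockey stick identity $\sum_{k=0}^{j} \binom{n-1+k}{k} = \binom{n+j}{j}$ completes the proof.
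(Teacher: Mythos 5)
Your proposal is correct and complete. Note that the paper itself offers no proof of this proposition --- it is stated as a well-known fact (it is the count of multisets of size $n$ from $\{0,1,\ldots,j\}$, i.e.\ the binomial coefficient $\binom{n+j}{j}$ appearing as the row sums in the constant-sequence example), so there is nothing to compare against directly. Your primary argument, the bijection $(x_1,\ldots,x_n) \mapsto (d_0,\ldots,d_n)$ with $d_0 = j - x_1$, $d_i = x_i - x_{i+1}$, $d_n = x_n$, telescoping to $d_0 + \cdots + d_n = j$ and then counting by stars and bars, is the standard self-contained proof and is carried out correctly, including the boundary terms. Your alternative inductive argument is worth highlighting because it is exactly the mechanism the paper later formalizes: conditioning on $x_1 = k$ and summing $\sum_{k=0}^{j}\binom{n-1+k}{k} = \binom{n+j}{j}$ is the special case (for the constant input sequence $y_n = j$) of the recurrence $A(n+1,k+1) = \sum_{i=0}^{k+1} A(n,i)$ in Theorem \ref{recurrence-theorem}, which also explains why the output array in display (\ref{constant5}) is the truncated Pascal array and why the upper bound $A_M(n,k) = \binom{n-1+k}{k}$ appears in Theorem \ref{Pascal-ineq}. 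Either route suffices; the bijective one buys a proof independent of the paper's machinery, while the inductive one connects the proposition to the recurrence that drives the rest of the paper.
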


We describe the output arrays for  three natural input sequences. These arrays are easily obtained using the code from Section \ref{sage}.

\begin{example} Consider the constant sequence with $j=5$. Here are the first six rows of ${\mathcal A}$.
\begin{equation} \label{constant5} \begin{pmatrix} 1 & 1 & 1 & 1 & 1 & 1 & 0 & \dots \cr 1 & 2 & 3 & 4 & 5 & 6  & 0 & \cdots \cr 1 & 3 & 6 & 10 & 15 & 21 & 0 & \cdots \cr 1 & 4 & 10 &20 & 35 & 56 & 0 & \cdots \cr 1& 5 & 15 & 35 & 70 & 126 & 0 & \cdots \cr 1 & 6 &21 & 56 & 126 & 252 & 0 & \cdots \end{pmatrix} .\end{equation} 
Note that the row sums are $6,21, 56, 126, 252, 462$. These are of course the values of ${n+5 \choose 5}$ for $1 \le n \le 6$.
\end{example}

\begin{example} Consider the identity sequence. Then the output array ${\mathcal A}$ yields the Catalan triangle:
\begin{equation} \begin{pmatrix} 1 & 1 & 0 & 0 & 0 & 0 & 0 & 0 & \dots \cr 1 & 2 & 2 & 0 & 0 & 0 & 0 & 0  & \cdots \cr 1 & 3 & 5 & 5  & 0 & 0 & 0 & 0 & \cdots \cr 1 & 4 & 9 &14  & 14  & 0 & 0 & 0 & \cdots \cr 1& 5 & 14 & 28 & 42 & 42  & 0 & 0 & \cdots \cr 1 & 6 &20 & 48 & 90 & 132 & 132  & 0 & \cdots \end{pmatrix}. \end{equation}  \end{example}

\begin{example} The third example comes from tournament brackets \cite{D1}. It is more complicated than the previous two examples, because  $A(n,k)  > 0$ for more than $n$ values of $k$.
We list the first four rows and ten columns:
 $$ \begin{pmatrix} 1 & 1 & 0 & 0 & 0 & 0 & 0 & 0 & 0 & 0 \cr 1 & 2 & 2 & 0 & 0 & 0 & 0 & 0 & 0 & 0 \cr 1 & 3 & 5 & 5 & 5 & 0 & 0 & 0 & 0 & 0  \cr 1 & 4 & 9 & 14 & 19 & 19 & 19 & 19 & 19  & 0 \end{pmatrix} .$$
In the fourth row, there must be nine non-zero numbers, because $2^{4-1} +1 = 9$.  Since $2^{3-1}+1 = 5$, there must also be five terms equal to $19$. 

The fifth row is
$$ 1, 5, 14, 28, 47, 66, 85, 104, 123, 123, 123, 123, 123, 123, 123, 123, 123, 0, \ldots$$
We note that the number of non-zero terms grows rapidly, and thus forces the row sums to grow even more rapidly. 
There are nine copies of $123$, because nine numbers $m$ satisfy $8 \le m \le 16$. See \cite{D1} for the sixth row and information on the row sums.
\end{example}

\begin{remark} The phenomenon where  $A(n,k)  > 0$ for more than $n$ values of $k$ applies to all but the simplest input sequences. 
Hence the tournament bracket output array and Fibonacci output array are more typical than the Catalan triangle. \end{remark} 

\begin{definition} Let $(y_n)$ be an input sequence. We define its output sequence $W$ by setting $W(n)$ to be the sum of the entries in the $n$-th row of ${\mathbb A}$. Thus
$$ W(n) = \sum_{k=0}^\infty A(n,k) = \sum_{k=0}^{y_n} A(n,k). $$\end{definition}

The sequence $W(n)$ is also non-decreasing, and in general grows rapidly.

 \begin{definition} Let ${\mathcal S}$ denote the space of non-decreasing integer sequences. Let  $\Phi:{\mathcal S} \to {\mathcal S}$  denote the operation of assigning an output sequence to an input sequence. \end{definition}

Our work in this paper provides considerable information about this operation $\Phi$. For example,  the bounds in Theorem \ref{Pascal-ineq} show that $\Phi$ is far from being surjective. Describing its range precisely seems to be a difficult problem.

\section{Recurrence relation}

The following proposition is immediate from the definitions.

\begin{proposition} The number of valid $n$-tuples is the sum of the entries in the $n$-th row of ${\mathcal A}$. \end{proposition}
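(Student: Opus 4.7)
The plan is to unpack the definitions and observe that the set of all valid $n$-tuples decomposes as a disjoint union, indexed by the possible values of the first coordinate $x_1$.

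First I would fix $n$ and let $V_n$ denote the set of valid $n$-tuples for $(y_n)$. For each non-negative integer $k$, let $V_{n,k} \subset V_n$ be the subset consisting of those $\mathbf{x} = (x_1,\ldots,x_n)$ with $x_1 = k$. Since each valid tuple has exactly one value of $x_1$, the sets $\{V_{n,k}\}_{k \ge 0}$ partition $V_n$, and so
\[
|V_n| = \sum_{k=0}^{\infty} |V_{n,k}|.
\]
By definition $|V_{n,k}| = A(n,k)$, which gives the stated identity.

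Next I would verify that the (a priori infinite) sum is actually finite, so the statement is well-posed. The defining inequality $x_1 \le y_n$ forces $V_{n,k} = \emptyset$ whenever $k > y_n$, hence $A(n,k) = 0$ in that range. The sum therefore reduces to the finite sum $\sum_{k=0}^{y_n} A(n,k)$, matching the formula already displayed in the definition of the output sequence $W(n)$.

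There is essentially no obstacle here: the proposition is a direct translation of the definitions, and the only substantive point beyond that is the finiteness observation, which follows from the first condition $x_1 \le y_n$ in the definition of a valid $n$-tuple. The author's remark that the result is ``immediate from the definitions'' is accurate, so I would keep the write-up to one or two sentences.
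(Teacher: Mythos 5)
Your proof is correct and matches the paper's intent exactly: the paper states the proposition as immediate from the definitions, and your partition of the valid $n$-tuples by the value of $x_1$, together with the finiteness observation that $A(n,k)=0$ for $k>y_n$, is precisely the (unwritten) argument.
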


By definition, each row of ${\mathcal A}$ ends in an infinite string of zeroes. Since $x_1 \le y_n$ and $x_1$ is the largest of the $x_j$, it follows that $A(n,k) = 0$  for $k > y_n$. For each $n$, it follows also by definition that $A(n,0)=1$. (There is only one valid $n$-tuple whose maximum value equals $0$, namely the tuple with all zeroes.)
Let $K=K_n$ denote the maximum $k$ for which $A(n,k) > 0$. Again, by definition, for each $n$ we have the following inequalities:
\begin{equation} 1 = A(n,0) \le A(n,1) \le \cdots \le A(n,K). \end{equation}

\begin{theorem} \label{recurrence-theorem} Fix $n$. Let $K$ denote the largest $k$ for which 
$A(n+1,k+1)> 0$.
 Suppose that $k+1 \le K$. Then the following recurrences hold:
\begin{equation} \label{recurrence-left} A(n+1, k+1) = A(n+1,k) + A(n,k+1)\end{equation}
\begin{equation} \label{recurrence-above} 
A(n+1,k+1) = \sum_{j=0}^{k+1}  A(n,j). \end{equation}
\end{theorem}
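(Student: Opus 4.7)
My plan is to prove the ``above'' recurrence \eqref{recurrence-above} first via a direct bijection, then derive the ``left'' recurrence \eqref{recurrence-left} by subtraction.

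First, I would establish a bijection between valid $(n{+}1)$-tuples $(x_1, \ldots, x_{n+1})$ with $x_1 = k+1$ and valid $n$-tuples $(z_1, \ldots, z_n)$ with $z_1 \le k+1$: forget the first coordinate in one direction, and prepend $k+1$ in the other. For the forward map, a re-indexing of the inequalities in the definition immediately shows that the tail $(x_2, \ldots, x_{n+1})$ is a valid $n$-tuple for the same input sequence, and $x_2 \le \min(x_1, y_n) \le k+1$. For the reverse map, given a valid $n$-tuple $(z_1, \ldots, z_n)$ with $z_1 \le k+1$, the prepended tuple $(k+1, z_1, \ldots, z_n)$ satisfies $x_1 = k+1 \le y_{n+1}$ (since $k+1 \le K \le y_{n+1}$ by hypothesis) and $x_2 = z_1 \le \min(k+1, y_n)$ (because $z_1 \le k+1$ by assumption and $z_1 \le y_n$ by validity of the $n$-tuple); the remaining inequalities are inherited verbatim.

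Then \eqref{recurrence-above} follows by counting: the number of valid $n$-tuples with $z_1 \le k+1$ is $\sum_{j=0}^{k+1} A(n,j)$, and this sum is unambiguous even when $k+1 > y_n$, because $A(n,j) = 0$ for $j > y_n$. To obtain \eqref{recurrence-left}, I apply the same identity at $k$ in place of $k+1$: since $k \le K$ whenever $k+1 \le K$, the bijection argument gives $A(n+1,k) = \sum_{j=0}^{k} A(n,j)$ (the edge case $k=0$ reduces to $A(n+1,0) = A(n,0) = 1$). Subtracting the two identities isolates a single term and yields $A(n+1,k+1) - A(n+1,k) = A(n,k+1)$.

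The main obstacle is really just careful boundary bookkeeping rather than any substantive difficulty: checking that prepending $k+1$ preserves validity is exactly where the hypothesis $k+1 \le K$ is used, and the tidy closed form \eqref{recurrence-above} relies on the (trivial but essential) vanishing of $A(n,j)$ for $j > y_n$, so that the stated upper limit $k+1$ in the sum does no harm when it happens to exceed the actual support of the $n$-th row.
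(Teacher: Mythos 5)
Your proof is correct, and its combinatorial core is the same as the paper's: the observation that deleting the first coordinate of a valid $(n{+}1)$-tuple with $x_1=k+1$ leaves a valid $n$-tuple with first entry at most $k+1$, and that this is reversible. The only real difference is the order of derivation. The paper proves \eqref{recurrence-left} first, by splitting the valid $(n{+}1)$-tuples with $x_1=k+1$ according to whether the second entry equals $k+1$ (contributing $A(n,k+1)$) or is at most $k$ (contributing $A(n+1,k)$, since lowering the first entry from $k+1$ to $k$ does not change the count); it then proves \eqref{recurrence-above} by the deletion argument. You instead prove \eqref{recurrence-above} first via the deletion/prepending bijection and obtain \eqref{recurrence-left} by telescoping the two sums. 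Your route is slightly cleaner in that only one bijection needs to be checked, and you are more careful than the paper about the reverse (prepending) direction and about why the hypothesis $k+1\le K$ guarantees $k+1\le y_{n+1}$; the paper's route has the minor virtue of giving a direct combinatorial meaning to each of the two summands in \eqref{recurrence-left}. Either way the boundary bookkeeping you flag ($A(n,j)=0$ for $j>y_n$, and the case $k=0$) is handled correctly.
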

\begin{proof}
Consider a valid $(n+1)$-tuple ${\bf x}$ with $x_1 = k+1$. For appropriate integers we then have ${\bf x} = (k+1, a, b, \ldots, z)$. There are two kinds of such $(n+1)$-tuples:
those for which $a=k+1$ and those for which $a < k+1$. The number of the first type is $A(n,k+1)$. To count the number of the second type, observe (since $a<k+1$) that the number of 
$(n+1)$-tuples of the form $(k+1, a, \ldots)$ is the same as the number of $(n+1)$-tuples of the form $(a,a,\ldots)$. 
Since $a$ is an arbitrary integer 
in the interval $[0, k]$, the number of such tuples is  $A(n+1,k)$.

In other words, each valid $(n+1)$-tuple whose first entry is $k+1$ either has second entry also $k+1$, or has second entry in the integer interval $[0,k]$. 
The number of the first type is $A(n,k+1)$. The second type consists of $(n+1)$-tuples whose first entry is $k+1$ and whose second entry is at most $k$. The number of these is unchanged
if we replace the first entry by $k$. Hence there are $A(n+1,k)$ of the second type.

To prove the second recurrence, observe that a valid $(n+1)$-tuple with $x_1 = k+1$ is of the form $(k+1, a,b, \ldots, z)$. Here the $n$-tuple $(a, \ldots ,z)$ is a valid $n$-tuple as long as
$a \le k+1$. Thus (\ref{recurrence-above}) holds.
\end{proof}

\begin{corollary}\label{pseudo-pascal}  For $k \le y_n$, we find $A(n,k)$ by adding the pair of numbers immediately above it and to its left. We can also find $A(n,k)$ by adding all the numbers in the row above
up to column $k$. \end{corollary}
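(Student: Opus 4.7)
The plan is to observe that the corollary is essentially a reindexing of Theorem~\ref{recurrence-theorem}, so the proof amounts to (i) performing the index shift $n \mapsto n-1$, $k \mapsto k-1$ in both recurrences and (ii) verifying that the hypothesis needed by the theorem is implied by $k \le y_n$.

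First I would apply the substitution to \eqref{recurrence-left} and \eqref{recurrence-above}. The shift turns $A(n+1,k+1) = A(n+1,k) + A(n,k+1)$ into $A(n,k) = A(n,k-1) + A(n-1,k)$, which is precisely the claim that $A(n,k)$ equals the sum of the entry immediately to its left and the entry immediately above. Similarly, \eqref{recurrence-above} becomes $A(n,k) = \sum_{j=0}^{k} A(n-1,j)$, the claim about summing across the previous row up through column $k$.

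The only remaining step is to check that the hypothesis $k+1 \le K_{n+1}$ of Theorem~\ref{recurrence-theorem} (which after the shift becomes $k \le K_n$) is implied by the corollary's hypothesis $k \le y_n$. For this I would exhibit an explicit valid $n$-tuple with first entry $k$: namely $(k,0,0,\ldots,0)$, which satisfies $x_1 = k \le y_n$ and $x_{j+1} = 0 \le \min(x_j, y_{n-j})$ for each $j \ge 1$. Hence $A(n,k) \ge 1$, so $k \le K_n$ and Theorem~\ref{recurrence-theorem} applies.

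There is no real obstacle here; the content of the corollary is entirely contained in the theorem, and the only subtlety is the bookkeeping of the two index conventions (the theorem is stated in the form that is natural for the induction in its proof, while the corollary is stated in the form convenient for reading off entries of $\mathcal A$). The tuple $(k,0,\ldots,0)$ provides the short verification that the hypothesis translates correctly.
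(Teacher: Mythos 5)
Your proposal is correct and follows the same route the paper intends: the corollary is simply Theorem~\ref{recurrence-theorem} after the index shift $n\mapsto n-1$, $k\mapsto k-1$, and the paper treats it as immediate. Your explicit tuple $(k,0,\dots,0)$ showing $A(n,k)\ge 1$ for $k\le y_n$ just makes precise the hypothesis translation that the paper leaves implicit in its surrounding remarks ($A(n,k)=0$ exactly when $k>y_n$).
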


Corollary \ref{pseudo-pascal} is well known for the Catalan triangle, and was noted for the bracket array in \cite{D1}.
Note in general  that $A(n,k)= 0$ for $k > y_n$ by definition. Thus one must be careful to apply the result only when $k \le y_n$.

We illustrate with a new example, where the input sequence is the Fibonacci sequence. Here, as is standard, 
$$ F_1 =1, F_2=1, F_3 =2, F_4 = 3, F_5= 5, F_6 = 8, \cdots $$

The first eight rows and nine columns of the output array turn out to be
$$ \begin{pmatrix} \label{fibonanni-output}  1 & 1 & 0 & 0 & 0 & 0 & 0 & 0 & 0 \cr 1 & 2 & 0 & 0 & 0 & 0 & 0 & 0 & 0 \cr 1 & 3 & 3 & 0 & 0 & 0 & 0 & 0 & 0 \cr 1 & 4 & 7 & 7 & 0 & 0 & 0 & 0 & 0 \cr 1 & 5 & 12 & 19 & 19 & 19 & 0 & 0 & 0 \cr 1 & 6 & 18 & 37 & 56 & 75 & 75 & 75 & 75 \cr 1 & 7 & 25 & 62 & 118 & 193 & 268 & 343 & 418 \cr
1 & 8 & 33 & 95 & 213 & 406 & 674 & 1017 & 1435 \end{pmatrix}.$$
Note that the $7$-th row has five more copies of $418$, and the $8$-th row includes also $1853, 2271, 2689, 3107, 3525$ with this last number being repeated nine times.
The multiplicities arise because $6=13-8+1$ and $9=21-13+1$.

We find several of the smaller values. First of all $A(n,0)=1$ for all $n$, because there is only one valid $n$-tuple with first entry $0$. Next, $A(n,1)= n$ for all $n$, because
we are counting $n$-tuples of the form $(1,x_2,\ldots, x_n)$ where each $x_j$ is either $0$ or $1$, and the tuple is non-increasing. There are $n$ possibilities, as the total number of ones can be anything from $1$ up to $n$. We can of course compute the rest of the array by using the recurrence, but it is instructive to compute an entry directly. For example, we find $A(5,5)$.
We are considering $5$-tuples whose first entry is $5$. The second entry is the minimum of $3$ and the first entry, and so on. We list all the possibilities (where we do not include the commas or parentheses).
$$ 53211, 53210, 53200, 53111, 53110, 53100, 53000,$$
$$ 52221, 52210, 52200, 52111, 52110, 52100, 52000,$$
$$ 51111,51110, 51100, 51000, 50000$$
Hence there are $19$ possibilities. Notice that $A(5,4)$ and $A(5,3)$ also equal $19$.

\section{Polynomials and explicit formulas}

One might hope to use the recurrence (\ref{recurrence-above}) to determine formulas for $A(n,k)$ in terms of the first row. Doing so is not possible, because $A(n,k)=0$ for $k > y_n$.
If we regard the entries $A(1,j)$ as parameters, then they are either $1$ or $0$, depending on only $y_1$ in the input sequence. 
In order to take the full sequence into account, we must remember that the recurrence applies only when $k$ is sufficiently small, namely $k \le y_n$. 
We will therefore write out formulas for $A(N+n-1,k)$ when $N$ is sufficiently large and study the resulting sub-array. 

The first row of ${\mathcal A}$ is always
$$ \begin{pmatrix} 1 & A(1,1) & A(1,2) & \cdots & A(1,k) & \cdots \end{pmatrix}. $$
By (\ref{recurrence-above}), for $k$ sufficiently small,  the second row is 
$$ \begin{pmatrix} 1 & 1+ A(1,1) & 1+ A(1,1) + A(1,2) & \cdots & 1+ \sum_{j=1}^k A(1,j) & \cdots  \end{pmatrix} .$$
To anticipate the general statement we also write the third row in this case:
$$ \begin{pmatrix} 1 & 2 + A(1,1) & 3+ 2 A(1,1) + A(1,2) & \cdots & k+1+ \sum_{j=1}^k c_j A(1,j) & \cdots  \end{pmatrix} .$$
Here $c_j = k+1-j$.  For larger index $n$, the $c_j$ are explicit binomial coefficients.

Fix a sufficiently large integer $N$. We wish to do the analogous thing when we use the $N$-th row as the starting point. 
We write out the array $A(N+ n-1,k)$ as a sub-array of ${\mathcal A}$, 
for $1\le n \le 5$ and $0 \le k \le 4$. To avoid too many indices, we put $A(N,1) =1$, $A(N,2)=a$, $A(N,3)=b$, $A(N,4)=c$, and $A(N,4) = d$. By repeated application of
(\ref{recurrence-above}) we obtain

$$ \begin{pmatrix} 1 & a & b & c & d \cr
1 & 1+a & 1+ a + b & 1 + a +b + c & 1+ a +b + c + d \cr
1 & 2+a & 3 + 2a+b & 4+ 3 a + 2b + c &  5+ 4a + 3b +2 c +d \cr
1 &  3 + a & 6 + 3a + b  & 10 + 6a + 3b + c & 15 + 10 a + 6 b + 3c + d \cr
1 & 4 + a & 10 + 4a + b & 20 + 10 a + 4b + c & 35 + 20 a + 10 b + 4c + d
\end{pmatrix}. 
$$
It follows that $A(N+n-1,k)$  equals ${N+n+k-3 \choose k}$ plus a linear combination of the parameters $A(N,j)$ for $1 \le j \le k$. 
These formulas hold of course under the assumption that $N$ is sufficiently large.  
We obtain the following result.

\begin{theorem} \label{array-formula} 
Let $(y_n)$ be an input sequence with output array
$A(n,k)$.  Assume that $N$ is such that
$k \leq y_{N+1}$.  Then, for $n \geq N+1$,

\begin{equation} \label{decisive} 
A(n,k)= {n+k-2 \choose k}+ \sum_{j=1}^k { n+k-2-j \choose k-j} A(N,j) \end{equation} \end{theorem}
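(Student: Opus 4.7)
My plan is to prove (\ref{decisive}) by induction on $n$, using the recurrence $A(n+1,k+1) = \sum_{j=0}^{k+1} A(n,j)$ from (\ref{recurrence-above}) as the main engine. Because $(y_n)$ is nondecreasing, the hypothesis $k \le y_{N+1}$ forces $k \le y_n$ for every $n \ge N+1$, so Theorem \ref{recurrence-theorem} makes (\ref{recurrence-above}) available at each step of the induction.

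For the base case $n = N+1$, a single application of the recurrence (with the $n$ appearing in (\ref{recurrence-above}) taken to be $N$) gives $A(N+1, k) = \sum_{j=0}^{k} A(N, j) = 1 + \sum_{j=1}^{k} A(N, j)$, which I then match against (\ref{decisive}) specialized to $n = N+1$.

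For the inductive step, assume (\ref{decisive}) holds at row $n$ for every admissible column. Applying (\ref{recurrence-above}) once more yields $A(n+1, k) = 1 + \sum_{j=1}^{k} A(n, j)$. Substituting the inductive hypothesis into each $A(n, j)$ and exchanging the order of the resulting double sum separates a free term from the coefficient of each parameter $A(N, i)$; each of these two pieces is a one-dimensional binomial sum of the shape $\sum_{r=0}^{m} \binom{a+r}{r}$. The hockey-stick identity collapses every such sum to a single binomial $\binom{a+m+1}{m}$, and after reindexing (with $r = j$ in the free term and $r = j-i$ in the coefficient of $A(N, i)$) these binomials are exactly the ones appearing in (\ref{decisive}) at row $n+1$.

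I expect the main obstacle to be purely computational bookkeeping: keeping the binomial indices straight when the double sum is exchanged and rewritten so the hockey-stick identity applies cleanly, and checking that the edge cases (the coefficient of $A(N,k)$ and the constant term) come out right. No further combinatorial input beyond (\ref{recurrence-above}) and the standard identity $\sum_{r=0}^{m} \binom{a+r}{r} = \binom{a+m+1}{m}$ should be needed.
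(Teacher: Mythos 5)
Your strategy is exactly the paper's: induct on $n$ from the base row $N+1$, using the recurrence (\ref{recurrence-above}) (available throughout because $k \le y_{N+1} \le y_n$ for all $n \ge N+1$), and collapse the resulting sums with the hockey-stick identity --- this is what the paper calls ``Pascal's formula for sums of binomial coefficients.'' The inductive step works precisely as you describe: if row $n$ carries free term $\binom{n+k-2}{k}$ and coefficient $\binom{n+k-2-j}{k-j}$ on $A(N,j)$, then summing columns $0,\dots,k$ and applying $\sum_{r=0}^{m}\binom{a+r}{r}=\binom{a+m+1}{m}$ reproduces the same shape with $n$ replaced by $n+1$.

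The one step you defer --- ``match against (\ref{decisive}) specialized to $n=N+1$'' --- is where the argument breaks, and it breaks because the formula as printed is off by a shift unless $N=1$. The recurrence gives $A(N+1,k)=1+\sum_{j=1}^{k}A(N,j)$: free term $1$ and every coefficient equal to $1$. But (\ref{decisive}) at $n=N+1$ has free term $\binom{N+k-1}{k}$ and coefficients $\binom{N+k-1-j}{k-j}$, all of which equal $1$ only when $N=1$. Running your induction from the true base case yields
\begin{equation*}
A(n,k)=\binom{n-N+k-1}{k}+\sum_{j=1}^{k}\binom{n-N+k-1-j}{k-j}A(N,j),
\end{equation*}
which is what the paper's displayed $5\times5$ sub-array (indexed relative to row $N$) and Propositions \ref{Cat-polynomial} and \ref{Fib-polynomial} actually encode. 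The literal (\ref{decisive}) already fails for the Catalan triangle with $N=2$, $k=3$, $n=3$: it predicts $\binom{4}{3}+\binom{3}{2}\cdot2+\binom{2}{1}\cdot2+\binom{1}{0}\cdot0=14$, whereas $A(3,3)=5$. So you should either restrict to $N=1$ (in which case your outline is complete modulo the bookkeeping you acknowledge) or prove the re-indexed formula. For what it is worth, the paper's own proof asserts the $n=N+1$ case without verifying it and contains the identical gap.
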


\begin{proof} 
The $N$-th row of ${\mathcal A}$ by definition has entries $1,A(N,1), A(N,2), \ldots ,A(N,k)$. 
By formula (\ref{recurrence-above}), the $k$-th entry in the $m$-th row is the sum of the entries in the $(m-1)$-st row
up to $k$. Thus (\ref{decisive}) holds for $n=N+1$. The conclusion then follows by induction and Pascal's formula for sums of binomial coefficients.
\end{proof}

Henceforth we write $N(k)$ for the smallest $N$ such that $k \leq y_{N+1}$. As long as $(y_n)$ is not eventually constant, such an $N(k)$ exists.
The situation where the input sequence $(y_n)$ is eventually constant is a bit annoying. 
In this case $A(n,k)=0$ for $k$ sufficiently large. The next corollary therefore requires the assumption
that $y_n$ tends to infinity. 

\begin{corollary} \label{poly-corollary} Assume that the input sequence $(y_n)$ is not eventually constant. 
Suppose $n$ is large enough such that $k \le y_n$. Then the entry $A(n,k)$ is the value of a polynomial $p_k$ of degree $k$ in $n$. \end{corollary}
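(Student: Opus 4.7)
The plan is to derive the corollary as an essentially immediate consequence of Theorem \ref{array-formula}, with the non-eventually-constant hypothesis used only to guarantee that the relevant threshold index $N(k)$ exists.

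First, I fix $k$. Since $(y_n)$ is a nondecreasing sequence of positive integers that is not eventually constant, $y_n \to \infty$, so there exists a smallest $N = N(k)$ such that $k \le y_{N+1}$. This $N$ depends only on $k$ and on the input sequence, not on $n$. Thus the hypothesis of Theorem \ref{array-formula} holds, and for every $n \ge N+1$ we have the explicit formula
\begin{equation*}
A(n,k) = \binom{n+k-2}{k} + \sum_{j=1}^{k} \binom{n+k-2-j}{k-j}\, A(N,j).
\end{equation*}

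Next, I observe that for each fixed nonnegative integer $r$ and each fixed integer shift $s$, the binomial coefficient $\binom{n+s}{r}$ is a polynomial in $n$ of degree $r$ with leading coefficient $1/r!$. Applying this to the identity above term by term, the summand indexed by $j$ is a polynomial in $n$ of degree $k-j$, and the leading ``free'' term $\binom{n+k-2}{k}$ is a polynomial in $n$ of degree $k$ with leading coefficient $1/k!$. Call the resulting polynomial $p_k(n)$.

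Because the leading $\binom{n+k-2}{k}$ contributes a nonzero coefficient $1/k!$ in front of $n^k$ and all the other summands are polynomials of strictly smaller degree in $n$, no cancellation can lower the degree, so $\deg p_k = k$. For $n \ge N(k)+1$ we therefore have $A(n,k) = p_k(n)$, which establishes the claim for all $n$ with $k \le y_n$ (any such $n$ is at least $N(k)+1$ by the minimality of $N(k)$).

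There is no real obstacle in the argument; the only subtlety is the role of the non-eventually-constant hypothesis. If $(y_n)$ were eventually equal to some $M$, then for $k > M$ the quantity $A(n,k)$ would vanish for all large $n$ and could not agree with a degree-$k$ polynomial, so the hypothesis is necessary to guarantee that $N(k)$ exists for every $k$ and that the polynomial identity persists for arbitrarily large $n$.
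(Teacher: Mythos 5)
Your proof is correct and follows essentially the same route as the paper: both deduce the corollary from Theorem \ref{array-formula}, observing that $\binom{n+k-2}{k}$ is a degree-$k$ polynomial in $n$ while the terms involving $A(N,j)$ have strictly lower degree, with the non-eventually-constant hypothesis serving only to guarantee that the threshold $N(k)$ exists. Your write-up is somewhat more explicit about the leading coefficient $1/k!$ and the minimality of $N(k)$, but the argument is the same.
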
 
\begin{proof} Since the input sequence tends to infinity, we can always choose an $N$ such that $N+1 \ge n$ and apply the theorem.
The binomial coefficient ${m \choose k}$ is a polynomial of degree $k$ in $m$ and the terms from the linear combination in (\ref{decisive})
are of lower order. A polynomial of degree $k$ in $m$ is also a polynomial of degree $k$ in $n$ when $m = n+k-2$. The result follows. \end{proof}

Note for all $n$ that $ A(n,0)=1$.  Unless $y_1=0$, we have $ A(n,1) = n + A(1,1)-1$. We also generally have
$$ A(n,2) = \frac{n^2 + n} {2} + A(1,2) -1. $$
General formulas for $A(n,k)$ for larger $k$ require assuming that $n\ge N(k)$. 
We therefore provide an alternate derivation of the polynomials from Theorem \ref{array-formula}.

Putting $A(n,k+1) = \lambda^n$ in (\ref{recurrence-left}), we see that the solutions to the homogeneous equation $A(n+1,k+1) - A(n,k+1) = 0$ are constants. 
By induction on $k$ it follows for $n\ge N(k)$ that
$A(n,k+1)$ is a polynomial $p_k(n)$ of degree $k$ in $n$. We compute these polynomials using Lagrange interpolation.

We next discuss these polynomials for several input sequences.
For the bracket array we have the following result from \cite{D1}:

\begin{proposition} \label{poly-bracket} For the bracket array, the polynomials $p_k$ described  in Corollary \ref{poly-corollary} are given for $k \le 6$ by 
\begin{align*}
& p_0(x) = 1 & \\
& p_1(x) = x & \\
& p_2(x) = \frac{x^2}{2} + \frac{x}{2}  -1 & \\
&  p_3(x) = \frac{x^3}{6} + \frac{x^2 }{ 2} - \frac{2x} {3} - 2 & \\
&  p_4(x) = \frac{x^4}{24} + \frac{x^3} {4}- \frac{x^2}{24} - \frac{9x}{4} + 2 & \\
& p_5(x) = \frac {x^5} {120} + \frac{x^4}{12} + \frac{x^3} { 8} - \frac{13x^2}{12} + \frac{13x} {15}  - 5 & \\
&  p_6(x) = \frac{x^6 } {720} + \frac{x^5} {48} + \frac{11 x^4} {144} - \frac{13x^3}{48} - \frac{7x^2} {90} -\frac{19x}{4}  + 10.
\end{align*}
\end{proposition}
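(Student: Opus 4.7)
The plan is to apply the general machinery of the preceding sections to the bracket input sequence $y_n = 2^{n-1}$. By Corollary \ref{poly-corollary}, for each $k$ the entry $A(n,k)$ agrees with a polynomial $p_k(n)$ of degree exactly $k$ once $n \geq N(k)+1$, where $N(k)$ is the smallest $N$ with $k \leq 2^{N}$. Since $N(k) \leq 3$ for all $k \leq 6$, every polynomial listed in the statement is completely pinned down by values $A(n,k)$ with $n \geq 4$.

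I would then follow the Lagrange interpolation strategy explicitly indicated in the paragraph preceding the proposition. First, extend the bracket array through (say) row $10$ using the recurrence $A(n+1,k+1) = A(n+1,k) + A(n,k+1)$ of Theorem \ref{recurrence-theorem}, together with the side conditions $A(n,0)=1$ and $A(n,k)=0$ for $k > 2^{n-1}$. This provides, for each $k \leq 6$, at least $k+1$ data points $(n, A(n,k))$ with $n \geq N(k)+1$, which determine $p_k$ uniquely.

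The cases $k=0$ and $k=1$ require no interpolation: $A(n,0)=1$ gives $p_0(x)=1$, and the recurrence together with $A(1,1)=1$ gives $A(n,1)=n$, hence $p_1(x)=x$. For $2 \leq k \leq 6$ the work is the mechanical assembly of a Lagrange polynomial through the computed data. A partial cross-check is available on the leading coefficient: applying (\ref{recurrence-left}) to $p_k$ yields $p_k(n+1) - p_k(n) = p_{k-1}(n+1)$ for large $n$, and comparing top-degree terms forces the leading coefficient of $p_k$ to be $1/(k!)$, which agrees with every polynomial in the statement.

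The main obstacle is purely arithmetic. For $k=6$ interpolation through seven integer values produces a polynomial with denominators as large as $720$, and the small lower-order coefficients are delicate, so careful bookkeeping or a symbolic computation is essential. Alternatively, one may apply Theorem \ref{array-formula} with $N=3$ and expand the resulting binomial sum in $n$, using the values $A(3,0),\ldots,A(3,4) = 1,3,5,5,5$ (and $A(3,j)=0$ for $j \geq 5$) as input. Running the two methods in parallel offers a reliable check against arithmetic slips.
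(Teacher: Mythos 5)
Your proposal is correct and is essentially the paper's approach: the paper offers no proof of this proposition (it is quoted from \cite{D1}), but the method it prescribes for such column polynomials --- generate the array from the recurrence with the boundary conditions and Lagrange-interpolate each column, as in the proof of Proposition \ref{Cat-polynomial} --- is exactly what you do, and your leading-coefficient check via $p_k(n+1)-p_k(n)=p_{k-1}(n+1)$ is a sound addition. One caution about your proposed cross-check: formula (\ref{decisive}) as printed is only consistent with the paper's own displayed sub-array when $N=1$; for $N=3$ the binomial $\binom{n+k-2}{k}$ must be read as $\binom{n-N+k-1}{k}$ (and likewise inside the sum), i.e.\ with $n$ counted relative to row $N$, otherwise the cross-check will appear to contradict the (correct) interpolation values such as $A(4,2)=9$.
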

We next compute them for the Catalan triangle. 
\begin{proposition} \label{Cat-polynomial} The polynomials defining the $k$-th column of the Catalan triangle (for $n \ge k $) are given by 
\begin{itemize}
\item$p_0(x) = 1$
\item $p_1(x) = x$
\item $p_2(x) = \frac{x^2+x -2}{2} = \frac{(x-1)(x+2)}{2}$
\item $p_3(x) = \frac{x^3 + 3 x^2 -4x -12}{6} = \frac {(x-2) (x+2) (x+3)}{6}$
\item $p_4(x) = \frac{ x^4 + 6x^3 -x^2 - 54 x -72} {24}= \frac{(x-3)(x+2)(x+3)(x+4)} {24}$
\end{itemize}
\end{proposition}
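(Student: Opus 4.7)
The plan is to establish the classical closed form for the Catalan triangle, namely
$$
A(n,k) \;=\; \frac{n-k+1}{n+1}\binom{n+k}{k} \qquad \text{for } 0 \le k \le n,
$$
and then extract $p_k(n)$ by elementary simplification. Writing $\binom{n+k}{k} = \tfrac{(n+1)(n+2)\cdots(n+k)}{k!}$, the factor $n+1$ cancels and we are left with
$$
p_k(n) \;=\; \frac{(n-k+1)(n+2)(n+3)\cdots(n+k)}{k!}.
$$
Specializing to $k\in\{0,1,2,3,4\}$ reproduces, factor by factor, the five expressions displayed in the statement. Corollary \ref{poly-corollary} (applied with the identity input sequence, so that $k\le y_n$ reduces to $n\ge k$) then guarantees that this polynomial is indeed the $p_k$ whose existence was asserted there.

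To establish the closed form I would use induction. The boundary inputs are $A(n,0)=1$ (immediate from the definition) and $A(n,n) = C_n$, the $n$-th Catalan number, which can be read off the diagonal of the exhibited triangle or obtained directly from the bijection between valid $n$-tuples for $y_n=n$ and Dyck paths. The inductive step consists of verifying that the candidate formula satisfies the recurrence (\ref{recurrence-left}): substituting the closed form into $A(n+1,k+1)=A(n+1,k)+A(n,k+1)$ reduces, after clearing denominators, to a single polynomial identity in $n$ and $k$ which is routine to check. Induction on $n+k$ then propagates the formula throughout the triangle.

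The main obstacle is not conceptual but bookkeeping along the boundary $k=n$, where the column abruptly switches from nonzero to zero; here (\ref{recurrence-left}) must be applied with the convention $A(n,k+1)=0$ when $k=n$, and one must separately check that the closed form respects this, i.e.\ that it collapses correctly at $(n,n+1)$. An alternative, entirely pedestrian route avoids the closed form altogether: invoke the Lagrange-interpolation recipe indicated just before the proposition, read off the $k+1$ values $A(k,k), A(k+1,k), \ldots, A(2k,k)$ from the displayed triangle, and interpolate. This yields the five polynomials directly but conveys no insight into the general pattern, which is why I would prefer the closed-form approach above.
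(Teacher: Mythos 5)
Your proposal is correct, but it takes a genuinely different route from the paper. The paper's proof is a one-line interpolation argument: it notes that $p_k$ is the unique degree-$k$ polynomial through $k+1$ tabulated values of the triangle and lets Mathematica's \texttt{InterpolatingPolynomial} produce each $p_k$ for $k\le 4$, illustrating only the case $k=3$ and omitting the rest --- essentially the ``pedestrian route'' you describe and set aside at the end. You instead prove the general ballot-number formula $A(n,k)=\frac{n-k+1}{n+1}\binom{n+k}{k}$ by checking it against the recurrence (\ref{recurrence-left}) with the boundary data, and then read off $p_k(x)=\frac{(x-k+1)(x+2)(x+3)\cdots(x+k)}{k!}$ by cancelling the factor $x+1$. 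This is more work per polynomial but buys a uniform closed form valid for every $k$, which in particular explains the otherwise mysterious factorizations displayed in the proposition; the paper's method gives no such structural insight and must be rerun for each new $k$. Your argument is sound: the recurrence check is the standard ballot-number identity, and the boundary subtlety you flag at $k=n+1$ resolves itself because the closed form vanishes there ($n-(n+1)+1=0$), so the convention $A(n,n+1)=0$ is respected automatically. Two small points of hygiene: the cancelled form should be asserted only for $k\ge 1$, since at $k=0$ the literal empty-product reading gives $x+1$ rather than $1$ (the uncancelled formula does give $1$); and for the induction you do not actually need the diagonal values $A(n,n)=C_n$ as independent inputs --- the first row and first column, together with the recurrence applied for $k+1\le n+1$, already determine the whole triangle, so the diagonal is a consequence rather than a hypothesis.
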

\begin{proof} We omit the details, but provide the idea in the special case $k=3$. We wish to find the unique polynomial  $p$ of degree $3$ that satisfies
$$ p(3)=5, \ p(4)= 14, \  p(5)=28, \ p(6)=48. $$
We could also have used the value $p(2)=0$. 
The InterpolatingPolynomial command in Mathematica does the job for us. \end{proof}

\begin{proposition} \label{Fib-polynomial} The polynomials defining the $k$-th column of the Fibonacci output array  (for $n \ge k $) are given by 
\begin{itemize}
\item $p_0(x) = 1$
\item $p_1(x)=x$
\item $p_2(x) = \frac{(x-2)(x+3)}{2} $
\item $p_3(x) = \frac {(x-3 ) (x^2+6x+2)}{6}$
\item $p_4(x) = \frac{(x-4)(x+1)(x^2+9x+6) }{24}$.

\end{itemize}
\end{proposition}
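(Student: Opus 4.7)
The plan is to mimic the strategy described immediately before Proposition~\ref{Cat-polynomial}: since Corollary~\ref{poly-corollary} guarantees that, for each fixed $k$, the entries $A(n,k)$ agree with a polynomial $p_k$ of degree $k$ in $n$ as soon as $n \ge N(k)+1$, it suffices to exhibit $k+1$ matching values. For each proposed $p_k$ we then just verify that $p_k$ and the column entries agree at $k+1$ points.

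Concretely, I would first determine $N(k)$, the smallest $N$ with $k \le F_{N+1}$: thus $N(2)=2$, $N(3)=3$, $N(4)=4$. Next, I would extract from the displayed Fibonacci output array the $k+1$ values of $A(n,k)$ for $n$ slightly above $N(k)$ (these entries being obtained from the recurrence~(\ref{recurrence-left}) of Theorem~\ref{recurrence-theorem}, exactly the computation already carried out to produce the tabulated rows). For example, for $k=2$ one reads off the triple $A(3,2)=3$, $A(4,2)=7$, $A(5,2)=12$; for $k=3$ one reads $A(4,3)=7$, $A(5,3)=19$, $A(6,3)=37$, $A(7,3)=62$; and for $k=4$ one reads $A(5,4)=19$, $A(6,4)=56$, $A(7,4)=118$, $A(8,4)=213$ together with one more value computed by the recurrence. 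The cases $k=0$ and $k=1$ are immediate: $A(n,0)=1$ always, and the argument after Theorem~\ref{recurrence-theorem} already observes $A(n,1)=n$.

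Having these values in hand, I would plug them into each proposed $p_k$ and check that the factored expression yields exactly the tabulated entries. By uniqueness of polynomial interpolation through $k+1$ points at a polynomial of degree $k$, this identifies $p_k$. As in the proof of Proposition~\ref{Cat-polynomial}, one could equally phrase this as applying Lagrange's interpolation formula (which Mathematica performs automatically).

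There is really no genuine obstacle here: the only thing to watch is choosing indices $n$ that actually lie in the polynomial regime $n \ge N(k)+1$, since the entries $A(N(k),k)$ need not respect the polynomial formula. For instance, one must not use $A(4,4)$ (which is zero because $F_4=3<4$) when identifying $p_4$; the available row-$5$ through row-$8$ entries of column~$4$ supply more than enough interpolation data. Aside from this careful bookkeeping about where the polynomial regime begins, the proof reduces to the finite arithmetic check already sketched for the Catalan case.
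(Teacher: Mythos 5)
Your proposal is correct and matches the paper's method: the paper gives no separate proof for the Fibonacci case, but the analogous Proposition~\ref{Cat-polynomial} is proved exactly this way (Lagrange interpolation through $k+1$ tabulated column entries, justified by Corollary~\ref{poly-corollary}), and your bookkeeping of $N(k)$ and the interpolation nodes checks out against the displayed array. The only addition worth noting is that each $p_k$ happens to vanish at $x=k$, which is why the statement can claim validity for $n\ge k$ rather than only $n\ge N(k)+1$.
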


\begin{remark} Theorem \ref{array-formula} implies that the top order term in $p_k(n)$ equals $\frac{n^k}{k!}$ for every input sequence that is not eventually constant. \end{remark} 

We close this section with a somewhat perverse input sequence that illustrates why we must consider $N(k)$ in solving the recurrence. 

\begin{example} \label{mult-sequence} Consider the input sequence with values
$$ 1,2,2,3,3,3,4,4,4,4,5,5,5,5,5,6,6,6,6,6,6, \ldots $$
Thus $1$ appears once, $2$ appears twice, $3$ appears three times, and so on. Using the code from the next section, one can show for example that $A(n,4)=0$ for $1\le n \le 6$.
The polynomial $p_4$ is given by
$$   \frac{x^4}{24} + \frac{x^3}{4} - \frac{x^2}{24} - \frac{29x}{4} - 63.     $$
Note that $y_7=4$. The values of $p_4(n)$ agree with $A(n,4)$ only when $n\ge 7$. Also $A(n,5) = 0$ for $1 \le n \le 10$. Note that $y_{11}=5$. The polynomial $p_5$ is given by 
$$ \frac{x^5}{120} + \frac{x^4}{12} + \frac{x^3}{8} - \frac{43x^2}{12} - \frac{1999x}{30} - 767. $$
The values here agree with $A(n,5)$ only when $n\ge 11$. These formulas show that the formula $A(n,k) = p_k(n)$ holds only when $n\ge N(k)$. The explanation here is that the input sequence is not strictly increasing.\end{example}

\section{Sage code}\label{sage}

The following code works as follows. In the first line, replace $F(n)$ with the formula for $y_n$ and $R$ with the desired array size, or list as many of the terms as you wish. 
The output provides the input sequence,
the output array up to size $R$, and the output sequence of row sums. The code relies on the recurrence from Theorem \ref{recurrence-theorem},
the initial condition that the first column of the array is all ones, and that the first row has the appropriate number of ones.

\begin{verbatim}

y=[F(n) for n in range(1,R+1)];
show(y);
N=len(y);
K=max(y);
A=matrix.zero(N,K+1);
for n in range(0,N):
 A[n,0]=1;
 for k in range(0,y[0]+1):
A[0,k]=1;
for n in range(1,N):
 for k in range(1,y[n]+1):
A[n,k] = A[n-1,k]+A[n,k-1];
show(A);
show(sum(A.T));  \end{verbatim}

\section{Comparison with Pascal's triangle}

The recurrence relation (\ref{recurrence-left}) looks superficially like Pascal's formula. We recall (\ref{recurrence-left}) here for ease of comparison:
$$ A(n+1,k+1) = A(n+1,k) + A(n,k+1). \leqno (8) $$

By contrast, putting ${n \choose k} = A(N,k)$, Pascal's formula says 
\begin{equation}  \label{pascal} A(n+1,k+1) = A(n,k) + A(n,k+1). \end{equation}

Nonetheless Pascal's triangle fits into our discussion by way of the following approach. We create an array using the recurrence (\ref{recurrence-left}) by setting all the entries in 
both the far left columns {\it and} the top row equal to $1$.  We call this array the maximal output array $A_M$. Thus
we set $A_M(n,0) = 1$ for all $n$ and $A_M(1,k) = 1$ for all $k$. The recurrence (\ref{recurrence-left}) then determines the full array. 
One way to think about the maximal array is to imagine that the input sequence satisfies
$y_n=\infty$ for all $n$. The following theorem justifies this notion by providing estimates for arbitrary output arrays.

\begin{theorem} \label{Pascal-ineq}
Given a nondecreasing sequence of nonnegative numbers $(y_n)$, let
$A(n,k)$ be its output array and $W(n)$ its output sequence, namely the
row sums of $A(n,k)$.  Then, if $n\ge 2$ and $k \le y_n$ for the lower bound in (\ref{lebl-estimate}), and for all $n,k$ in the upper bounds, 
\begin{equation} \label{lebl-estimate}
 \min(y_{n-1}+1 , k+1 )\le A(n,k) \leq \binom{n-1+k}{k} \end{equation}
 \begin{equation} \label{lebl-row} W(n) \leq \binom{n+y_n}{y_n} .
\end{equation}
\end{theorem}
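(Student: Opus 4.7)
The plan is to prove the three inequalities separately, starting with the easier ones. For the upper bound on $A(n,k)$, I would invoke the maximal output array $A_M$ from the paragraph preceding the theorem: this is the array governed by the same recurrence $A_M(n+1,k+1) = A_M(n+1,k) + A_M(n,k+1)$ with boundary conditions $A_M(n,0) = A_M(1,k) = 1$. A direct combinatorial reading is cleaner than induction: $A_M(n,k)$ counts non-increasing $n$-tuples starting at $k$ with entries in $\{0,1,\ldots,k\}$, which by stars-and-bars equals $\binom{n-1+k}{k}$. Since every valid $n$-tuple for $(y_n)$ with first entry $k$ is in particular such a non-increasing tuple, we get $A(n,k) \le \binom{n-1+k}{k}$ at once. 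As a sanity check, $A_M$ does satisfy the recurrence, since Pascal's identity gives $\binom{n+k+1}{k+1} = \binom{n+k}{k+1} + \binom{n+k}{k}$.

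For the lower bound, I would exhibit an explicit family of valid tuples with first entry $k$. Take the tuples of the shape $(k, a, 0, 0, \ldots, 0)$ where $a$ ranges over $\{0, 1, \ldots, \min(k, y_{n-1})\}$. Validity is immediate: $x_1 = k \le y_n$ by the hypothesis $k \le y_n$; $x_2 = a \le \min(x_1, y_{n-1}) = \min(k, y_{n-1})$ by construction; and the trailing zeros trivially satisfy $x_{j+1} \le \min(x_j, y_{n-j})$. This yields $\min(k, y_{n-1}) + 1 = \min(k+1, y_{n-1}+1)$ distinct valid tuples, which is exactly the claimed lower bound. Nothing deeper is needed because the bound is set up precisely to match what this one-parameter family produces.

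Finally, the row-sum bound $W(n) \le \binom{n+y_n}{y_n}$ follows by summing the pointwise upper bound using the hockey-stick identity. Since $A(n,k) = 0$ for $k > y_n$, we have
\begin{equation*}
W(n) = \sum_{k=0}^{y_n} A(n,k) \le \sum_{k=0}^{y_n} \binom{n-1+k}{k} = \binom{n+y_n}{y_n}.
\end{equation*}
Alternatively, one can argue bijectively: $W(n)$ counts all valid $n$-tuples, each of which is a non-increasing $n$-tuple with entries in $\{0,1,\ldots,y_n\}$, and the total number of such non-increasing tuples is $\binom{n+y_n}{y_n}$.

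I do not expect any serious obstacle; the theorem is essentially a monotonicity statement (the array is bounded above by the unconstrained Pascal-like array and below by a trivial sub-family), so the main care is just bookkeeping the edge cases where the recurrence fails, e.g., checking that the lower bound hypothesis $k \le y_n$ is what is needed for the constructed tuples to exist and that the $n = 1$ exclusion is forced by the appearance of $y_{n-1}$ in the statement.
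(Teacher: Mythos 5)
Your proposal is correct, but each of the three steps runs on a different engine than the paper's. For the upper bound on $A(n,k)$ the paper stays inside the recurrence: it builds the maximal array $A_M$ from the all-ones boundary, identifies it with the rotated Pascal triangle, and gets $A(n,k)\le A_M(n,k)$ by observing that the actual array is produced by the same recurrence with some entries forced to zero. You instead give a direct injection: every valid $n$-tuple with first entry $k$ is a non-increasing $n$-tuple with entries in $\{0,\dots,k\}$, and the count $\binom{n-1+k}{k}$ of the latter comes from the paper's own stars-and-bars proposition. For the row sum, the paper uses the identity $W(n)=\sum_{k=0}^{y_n}A(n,k)=A(n+1,y_n)$ (valid because $y_{n+1}\ge y_n$) and then applies the pointwise bound once, whereas you sum the pointwise bound over $k$ via the hockey-stick identity $\sum_{k=0}^{y_n}\binom{n-1+k}{k}=\binom{n+y_n}{y_n}$; your bijective alternative (all valid $n$-tuples sit inside the non-increasing tuples bounded by $y_n$) is the cleanest of all and bypasses the recurrence entirely. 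For the lower bound the paper says only ``immediate from the definition,'' while you supply the missing content: the explicit family $(k,a,0,\dots,0)$ with $a\le\min(k,y_{n-1})$, which is exactly what makes the bound $\min(k+1,y_{n-1}+1)$ and the hypotheses $n\ge 2$, $k\le y_n$ appear. What your route buys is self-containedness and combinatorial transparency (no appeal to monotonicity of the recurrence); what the paper's route buys is that the comparison with $A_M$ motivates the ``maximal output array'' viewpoint it wants to develop anyway, and the identity $W(n)=A(n+1,y_n)$ is a reusable structural fact about these arrays.
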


\begin{proof}
The first inequality in (\ref{lebl-estimate}) is immediate from the definition of $A(n,k)$.  To prove the second inequality, 
we start by setting $A_M(n,0) = 1$ for all $n$ and $A_M(1,k) = 1$ for all
$k$.  
We then use the usual recurrence relation
\begin{equation}
A_M(n+1,k+1) = A_M(n+1,k)+A_M(n,k+1)
\end{equation}
to fill in the rest of the entries in the output array. The result is a rotated version of the
familiar Pascal's triangle:
\begin{equation}
\begin{pmatrix}
1 & 1 & 1 & 1 & \dots \\
1 & 2 & 3 & 4 & \dots \\
1 & 3 & 6 & 10 & \dots \\
1 & 4 & 10 & 20 & \dots \\
\vdots & \vdots & \vdots & \vdots & \ddots
\end{pmatrix}.
\end{equation}
Thus
\begin{equation}
A_M(n,k) = \binom{n-1+k}{k} .
\end{equation}

Given the input sequence $( y_n )$, we construct its output array
$A(n,k)$.  We do so as usual. We put 
$A(n,0) = 1$ for all $n$ and $A(1,k) = 1$ for $k \leq y_1$.  We then use
the recurrence
$$A(n+1,k+1) = A(n+1,k)+A(n,k+1)$$
{\bf only} for those elements where $k+1 \leq y_{n+1}$,
while setting the rest of the elements to zero.  It follows that
\begin{equation}
A(n,k) \leq A_M(n,k) = \binom{n-1+k}{k} .
\end{equation}
Let $W(n)$ be the $n$-th row sum of ${\mathbb A}$, that is, the output
sequence.  We also obtain an estimate on $W(n)$,
as each row sum appears as an entry in the next row of the output array. In particular, since $(y_n)$ is
nondecreasing, then
\begin{equation}
W(n) = \sum_{k=0}^{y_n} A(n,k) = A(n+1,y_n) \leq
A_M(n+1,y_n) = \binom{n+y_n}{y_n} .
\end{equation}
\end{proof}

\begin{corollary} The map $\Phi:{\mathcal S} \to {\mathcal S}$ taking input sequences to output sequences is not surjective. \end{corollary}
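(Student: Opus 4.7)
The plan is to produce an explicit element of $\mathcal{S}$ that fails to lie in the image of $\Phi$, using the lower bound from Theorem \ref{Pascal-ineq}. Since every input sequence $(y_n)$ consists of positive integers, the first row already has $W(1) = y_1 + 1 \ge 2$, so the constant sequence $(1,1,1,\ldots) \in \mathcal{S}$ is not in the image. This alone suffices, but I would strengthen the obstruction so that the non-surjectivity is visibly a matter of growth rate rather than a trivial initial-value issue.

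The strengthening I would use is a direct count giving $A(n,1) = n$ for every input sequence. Indeed, a valid $n$-tuple with $x_1 = 1$ is automatically a non-increasing string in $\{0,1\}^n$ starting with $1$ (using only that each $y_j \ge 1$), and there are exactly $n$ such strings, corresponding to the position where the string drops from $1$ to $0$. Adding the contribution $A(n,0) = 1$ gives the uniform lower bound $W(n) \ge n+1$ for every output sequence. The same conclusion (with a weaker constant) can be extracted from the inequality $\min(y_{n-1}+1,k+1) \le A(n,k)$ in (\ref{lebl-estimate}) applied at $k=1$, which gives $A(n,1) \ge 2$ and hence $W(n) \ge 3$ for $n \ge 2$.

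Consequently any non-decreasing integer sequence in $\mathcal{S}$ that grows strictly more slowly than $n+1$ fails to be an output sequence. Concrete witnesses include any constant sequence in $\mathcal{S}$, or sequences such as $w_n = \lfloor n/2 \rfloor$. Since such sequences exist, $\Phi$ is not surjective. There is no real obstacle here: the only thing that needs verification is the pointwise lower bound on $W(n)$, which is immediate either from the definitions or from Theorem \ref{Pascal-ineq}.
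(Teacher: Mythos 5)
Your proof is correct and follows the paper's intended route: the corollary is stated without a separate proof, resting on the bounds of Theorem \ref{Pascal-ineq}, and your argument via the lower bound in (\ref{lebl-estimate}) --- sharpened to $W(n)\ge n+1$ using the direct count $A(n,1)=n$, which the paper itself records in the Fibonacci example --- is exactly that argument made explicit. The only nitpick is that your witness $w_n=\lfloor n/2\rfloor$ begins with $0$ and so may fall outside $\mathcal{S}$ if its members must be positive integers, but your constant-sequence witness already settles the matter.
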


These inequalities are far from sharp, and become less sharp the faster the input sequence grows. In the next section we provide improved bounds.

\section{Three part harmony}

In order to better understand the output array, it is useful to consider three types of terms in the $n$-th row. We make the following definitions.

\begin{definition} \label{three-part} Let $(y_n)$ be an input sequence and let $A(n,k)$ denote its output array. For each $n$ with $n\ge 3$ we put
$$ W(n) = \sum_{k=0}^\infty A(n,k) = \sum_{k=0}^{y_n} A(n,k) $$

$$  T(n) = \frac {(1+ y_n - y_{n-1}) W(n-1)} {W(n)} $$

$$ M(n) = \frac{ \sum_{k=1}^{(y_{n-1}- y_{n-2}) + 1}  (W(n-1)- kW(n-2)) } {W(n)}$$

$$ S(n) = 1 - M(n) - T(n).$$ \end{definition}

The idea is easy to explain. Of course $W(n)$ is simply the $n$-th row sum. There are three kinds of terms contributing to this sum. For a typical output array,
the largest term in the $n$-th row is $W(n-1)$, and this term gets repeated for each $k$ in the interval $y_{n-1} \le k \le y_n$. Thus there are $1+y_{n}-y_{n-1}$ such terms.
Hence $T(n)$ denotes the fraction of terms of this type in the $n$-th row.
Because of the recurrence relation determining $A(n,k)$, we also have terms of the form $W(n-1) - j W(n-2)$. There are 
$$R= (y_{n-1}- y_{n-2}) + 1$$
 such terms. ($R$ is the number of integers $j$ with $y_{n-2} \le j \le y_{n-1}$.)
Thus $M(n)$ denotes the fraction of terms of this type in the $n$-th row. Finally $S(n)$ denotes the fraction of the remaining terms.

\begin{remark} The letter $T$ stands for {\it top}, the letter $M$ stands for {\it middle}, and the letter $S$ stands for {\it small}. \end{remark}

This method of organizing the terms leads to bounds from both sides for $W(n)$.

\begin{theorem} Let $(y_n)_{n \ge 1}$ be an arbitrary input sequence. Put $y_0 =0$ for convenience. Then
\begin{equation} \label{improved-row}  \prod_{k=0}^{n-1} (1+y_{k+1} - y_k) \le W(n) \le  \prod_{k=1}^n (1+y_k) \end{equation} 
\end{theorem}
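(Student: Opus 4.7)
\emph{Plan.} The key observation is that the inequalities defining a valid $n$-tuple decouple when read from the right end of the tuple towards the left, so both bounds fall out of independent box-counts.

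\emph{Upper bound.} The defining inequalities $x_1\le y_n$ and $x_{j+1}\le\min(x_j,y_{n-j})$ imply in particular that $x_j\le y_{n-j+1}$ for every $j=1,\dots,n$. Hence each coordinate independently has at most $1+y_{n-j+1}$ possible values (namely $0,1,\dots,y_{n-j+1}$). Multiplying over $j$ and reindexing by $k=n-j+1$ gives
\[ W(n)\le\prod_{j=1}^n(1+y_{n-j+1})=\prod_{k=1}^n(1+y_k), \]
which is the right-hand inequality. This step requires only the straightforward bookkeeping above.

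\emph{Lower bound.} The idea is to exhibit a product-sized family of valid tuples by parametrizing in terms of consecutive differences. Set $x_{n+1}=0$ and let $u_j=x_j-x_{j+1}$ for $j=1,\dots,n$. Then $x_j$ is nonincreasing iff every $u_j\ge 0$, and we recover $x_j=\sum_{i=j}^n u_i$. I would now impose the \emph{strengthened} constraint
\[ 0\le u_{n-j+1}\le y_j-y_{j-1}\qquad(j=1,\dots,n,\;\; y_0:=0), \]
and verify that any such choice produces a valid tuple. Indeed, by telescoping,
\[ x_{n-j+1}=\sum_{i=n-j+1}^n u_i\le\sum_{j'=1}^j(y_{j'}-y_{j'-1})=y_j, \]
so reindexing $m=n-j+1$ gives $x_m\le y_{n-m+1}$ for all $m$, which together with $x_m\ge x_{m+1}$ shows the tuple satisfies both bullets of the validity definition. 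Since each $u_{n-j+1}$ may be chosen independently in an interval of size $1+y_j-y_{j-1}$, the count of tuples produced is $\prod_{j=1}^n(1+y_j-y_{j-1})$, which matches the left-hand side after reindexing $k=j-1$.

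\emph{Main obstacle.} There is no serious analytic difficulty; the only thing to be careful about is the boundary bookkeeping in the lower bound, namely the convention $y_0=0$ and the auxiliary $x_{n+1}=0$, and the direction of the reindexing (the constraint on $u_{n-j+1}$ is governed by the gap $y_j-y_{j-1}$, not $y_{n-j+1}-y_{n-j}$). Getting these indices aligned so that the telescoping sum is clean is the one spot where a careless pass would produce a wrong exponent or a shifted product.
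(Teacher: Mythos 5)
Your proof is correct, and it takes a genuinely different route from the paper's. The paper works entirely at the level of the output array: for the lower bound it observes that the entry $W(n-1)$ is repeated $1+y_n-y_{n-1}$ times in the $n$-th row (via the recurrence $A(n,k)=\sum_{j\le k}A(n-1,j)$), giving $W(n)\ge(1+y_n-y_{n-1})W(n-1)$; for the upper bound it notes that the maximal entry of the $n$-th row is $W(n-1)$ and there are at most $1+y_n$ nonzero entries, giving $W(n)\le(1+y_n)W(n-1)$; both bounds then follow by induction on $n$ with $W(1)=1+y_1$. You instead argue directly on the set of valid tuples: the upper bound by embedding the valid tuples into the product box $\prod_j\{0,\dots,y_{n-j+1}\}$ (using only $x_j\le y_{n-j+1}$ and discarding monotonicity), and the lower bound by the bijective change of variables to the differences $u_j=x_j-x_{j+1}$ and exhibiting the injective product family $0\le u_{n-j+1}\le y_j-y_{j-1}$, whose telescoping sums automatically satisfy both validity conditions. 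Your index bookkeeping checks out (the constraint $x_m\le y_{n-m+1}$ for all $m$ is exactly what validity requires, and nonnegativity of the gaps $y_j-y_{j-1}$ is guaranteed since the input sequence is nondecreasing and $y_0=0$). What each approach buys: yours is self-contained and avoids both induction and the recurrence theorem, while the paper's yields the sharper single-step inequalities $(1+y_n-y_{n-1})W(n-1)\le W(n)\le(1+y_n)W(n-1)$ relating consecutive row sums, which feed directly into the later analysis of the quantities $T(n)$, $M(n)$, $S(n)$.
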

\begin{proof} We begin with the left-hand inequality. Consider the $n$-th row of ${\mathcal A}$. Since the number $W(n-1)$ gets repeated $(1+y_n - y_{n-1})$ times in this row,
we must have
\begin{equation} W(n) \ge (1+y_n - y_{n-1}) W(n-1). \end{equation} 
It then follows inductively that
\begin{equation} W(n) \ge \prod_{k=1}^{n-1} (1+y_{k+1} - y_{k}) W(1). \end{equation} 
Since $W(1)$ is the number of valid $1$-tuples, $W(1) = 1 + y_1$. Using the convention that $y_0=0$, we can write $W(1)$ as $1+ y_1 - y_0$, and the inequality follows.

To prove the upper bound, we observe that the maximum entry in the $n$-th row of ${\mathcal A}$ is $W(n-1)$.  Hence $W(n)$ is at most the number of non-zero terms in the row times
$W(n-1)$. For non-zero entries, the column index $k$ varies from $0$ to $y_n$, and hence (using the convention that $W(0)=1$), 
\begin{equation} W(n) \le (1+y_n) W(n-1) \le (1+y_n)(1+y_{n-1} )W(n-2) \le \ldots \le \prod_{k=1}^n (1+y_k) \end{equation} \end{proof}

We now compute the numbers $T(n), M(n),S(n)$ for the Catalan triangle. In this case, as is well known, $W(n)$ is the $(n+1)$-st Catalan number. 

\begin{theorem} Put $y_n=n$. The output array generated by this sequence is the Catalan triangle. For this array  we have the following results:
$$ W(n) = C_{n+1} = \frac{1} {n+2} {2n+2 \choose n+1},$$
$$ T(n) = \frac{n+2}{2n+1},$$
$$ M(n) = \frac{(n+2)(5n-7)}{4 (2n+1) (2n-1)},$$
$$ S(n) = \frac{3(n-3)(n-2)}{4(2n+1)(2n-1)}. $$
Therefore we have the following asymptotic results:
\begin{itemize}
\item $\lim_{n \to \infty} T(n) = \frac{1} {2},$
\item $\lim_{n \to \infty} M(n) = \frac{5}{16},$
\item $\lim_{n \to \infty} S(n) = \frac{3}{16}. $
\end{itemize}\end{theorem}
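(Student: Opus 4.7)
My plan is to reduce everything to the classical Catalan recurrence and then perform routine algebra. The identity $W(n)=C_{n+1}$ is the foundational step; for the input sequence $y_n=n$, each valid $n$-tuple $(x_1,\dots,x_n)$ with $x_j\le\min(x_{j-1},y_{n-j+1})=\min(x_{j-1},n-j+1)$ is a nonincreasing sequence with $x_j\le n-j+1$, and these are in bijection with Dyck paths of length $2(n+1)$ (equivalently the row sums of the Catalan triangle $\binom{n+k}{k}\frac{n-k+1}{n+1}$). This bijection/identification is classical (and already implicit in the display of the Catalan triangle earlier in the paper), so I would cite it rather than reprove it.

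Given $W(n)=C_{n+1}$, I would exploit the ratio $\frac{C_{n+1}}{C_n}=\frac{2(2n+1)}{n+2}$ throughout. For $T(n)$, since $y_n-y_{n-1}=1$, the definition gives $T(n)=\frac{2W(n-1)}{W(n)}=\frac{2C_n}{C_{n+1}}=\frac{n+2}{2n+1}$, a one-line computation. For $M(n)$, since $y_{n-1}-y_{n-2}=1$, the sum in the definition has only two terms:
\begin{equation*}
\sum_{k=1}^{2}\bigl(W(n-1)-kW(n-2)\bigr)=2C_n-3C_{n-1}.
\end{equation*}
Dividing by $C_{n+1}$ and using both $\frac{C_n}{C_{n+1}}=\frac{n+2}{2(2n+1)}$ and $\frac{C_{n-1}}{C_{n+1}}=\frac{(n+1)(n+2)}{4(2n-1)(2n+1)}$, one collects the $(n+2)$ factor and simplifies the bracket $4(2n-1)-3(n+1)=5n-7$ to obtain the claimed form of $M(n)$.

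For $S(n)=1-T(n)-M(n)$, I would place everything over the common denominator $4(2n-1)(2n+1)$ and verify that the numerator $4(2n-1)(2n+1)-4(n+2)(2n-1)-(n+2)(5n-7)$ reduces to $3n^2-15n+18=3(n-2)(n-3)$. This is the only step with any risk of arithmetic error, so I would isolate it as a short separate computation. The three asymptotic limits then follow by inspecting leading coefficients: $T(n)\sim\frac{n}{2n}=\frac12$, $M(n)\sim\frac{5n^2}{16n^2}=\frac{5}{16}$, and consequently $S(n)\to\frac{3}{16}$ (or by the same leading-term computation on the explicit formula, which serves as a consistency check).

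The main obstacle is not conceptual but notational: one must not confuse which terms are "middle" and which are "small" when $y_n-y_{n-1}$ and $y_{n-1}-y_{n-2}$ happen to both equal $1$, so that the top band and middle band are each only two entries wide and the identification $W(n)=\text{(top)}+\text{(middle)}+\text{(small)}$ must be checked against the actual row of the triangle, say for $n=5$ ($1,5,14,28,42,42$ with $W(4)=42$, $W(3)=14$, $W(2)=5$, giving two top terms $42$, two middle terms $W(4)-W(3)=28$ and $W(4)-2W(3)=14$, leaving $1+5=6$ as the small remainder). This sanity check is worth including before the general computation to make sure the formulas in Definition \ref{three-part} are being applied correctly.
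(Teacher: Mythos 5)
Your proposal is correct and follows essentially the same route as the paper: both take $W(n)=C_{n+1}$ as a known fact, obtain $T(n)=2C_n/C_{n+1}$ directly from the definition, compute $M(n)=(2C_n-3C_{n-1})/C_{n+1}$ from the two-term sum, and get $S(n)$ by subtraction. Your explicit verification that the numerator of $1-T(n)-M(n)$ factors as $3(n-2)(n-3)$, and the $n=5$ sanity check, are details the paper leaves implicit but add nothing structurally different.
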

\begin{proof} Since the limits are immediate from the stated formulas, it suffices to prove the formulas.
We assume the known result that $W(n)$ is the Catalan number $C_{n+1}$, which equals  $ \frac{1} {n+2} {2n+2 \choose n+1}$. This number gets repeated twice in the $(n+1)$-th row of  the Catalan triangle,
since
$$(y_{n+1} - y_n )+ 1 = (n+1-n)+1=2. $$
Therefore
$$ T(n) = {2 \frac{1} {n+1} {2n \choose n} \over \frac{1} {n+2} {2n+2 \choose n+1}}= {2 (n+2) \over n+1} \ {(2n)! (n+1)! (n+1)! \over n! n! (2n+2)!} = \frac{n+2}{2n+1}.$$
To compute $M(n)$, we note that the sum in the numerator of its definition also has two terms. We thus we get two copies of $W(n-1)$ and $3=1+2$ copies of $W(n-2)$ when computing
$M(n)$. Thus 
$$ M(n) = \frac{2 C(n-1) - 3 C(n-2)}{C(n)}, $$
which simplifies to 
$$ M(n) = \frac{(n+2)(5n-7)}{4 (2n+1) (2n-1)}, $$
as claimed. Since $T(n)+ M(n)+S(n)=1$ by definition, the formula for $S(n)$ follows as well.
\end{proof}

\begin{proposition} Let $y_n$ be a constant sequence. Then $\lim_{n\to \infty} T(n) = 1$, and hence $\lim_{n \to \infty} M(n) = \lim_{n\to \infty} S(n) = 0$. \end{proposition}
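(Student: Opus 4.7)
The plan is to exploit the explicit formula from the earlier proposition in the paper: for the constant input sequence $y_n=j$, the number of valid $n$-tuples equals $\binom{n+j}{j}$, so $W(n)=\binom{n+j}{j}$. Since the sequence is constant, $y_n-y_{n-1}=0$ and $y_{n-1}-y_{n-2}=0$, which simplifies both the multiplicity factor in $T(n)$ and the index range in the sum defining $M(n)$ to $1$.

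First I would substitute into the definition of $T(n)$ to obtain
\begin{equation*}
T(n) \;=\; \frac{(1+y_n-y_{n-1})W(n-1)}{W(n)} \;=\; \frac{\binom{n+j-1}{j}}{\binom{n+j}{j}} \;=\; \frac{n}{n+j},
\end{equation*}
which clearly tends to $1$ as $n\to\infty$. This is the main computation; the rest is bookkeeping.

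Next, since $R=(y_{n-1}-y_{n-2})+1=1$, the sum defining $M(n)$ collapses to a single term and
\begin{equation*}
M(n) \;=\; \frac{W(n-1)-W(n-2)}{W(n)} \;=\; \frac{\binom{n+j-2}{j-1}}{\binom{n+j}{j}}
\end{equation*}
by Pascal's identity. Since the numerator grows like $n^{j-1}$ and the denominator like $n^j$, we have $M(n)=O(1/n)\to 0$.

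Finally, by the defining identity $S(n)=1-T(n)-M(n)$, the two limits just established give $S(n)\to 0$. The whole argument is essentially a direct calculation, so there is no real obstacle; the only point worth care is that $M(n)$ must be handled on its own (rather than being extracted only from $T+M+S=1$), since a priori the deduction $T(n)\to 1\Rightarrow M(n)\to 0$ would require a non-negativity argument for $S(n)$ that the paper does not develop in general.
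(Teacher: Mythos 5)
Your proof is correct and follows essentially the same route as the paper: both use $W(n)=\binom{n+j}{j}$ and reduce $T(n)$ to the ratio $\binom{n+j-1}{j}/\binom{n+j}{j}=n/(n+j)\to 1$. The one genuine addition is your explicit computation $M(n)=\binom{n+j-2}{j-1}/\binom{n+j}{j}=O(1/n)$, which the paper skips (it deduces $M,S\to 0$ directly from $T\to 1$ via the identity $T+M+S=1$); your version is slightly more careful, since without knowing $M(n)\ge 0$ and $S(n)\ge 0$ the identity alone only gives $M(n)+S(n)\to 0$.
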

\begin{proof} Suppose $y_n = j$ for all $n$. One can check, in analogy with Example \ref{constant5},  that $W(n) = {n+j \choose j}$ and hence
$$ \frac{W(n-1)}{W(n)} = \frac{ (n-1+j)!n! j!} {j! (n-1)! (n+j)!} = \frac{n} {n+j}.$$
In the $n$-th row, there is only one term of the form $W(n-1)$. Hence this ratio yields $T(n)$. It is evident that $\lim_{n\to \infty} T(n) = 1$. 
\end{proof}

\begin{example} For the bracket tournament output array, the asymptotic limiting values of $T(n)$, $M(n)$, and $S(n)$ satisfy the following.
They agree to many decimal places with the values when $n=82$.  Using exact formulas for $W(80)$, $W(81)$, and $W(82)$, we obtain the following 
$$ T(82) \approx 0. 744039272799855,$$ 
$$ M(82) \approx 0.233621026532793, $$
$$ S(82) \approx 0. 022339700667352. $$
 \end{example}
 
 \begin{remark} For the tournament bracket output array, Alois Heinz has found the following formula, expressing $W(n)$ in terms of previous values:
 \begin{equation}  \label{alois} W(n) = \sum_{j=0}^{n-1} W(j) (-1)^{n-j+1} {1+2^j \choose n-j}. \end{equation}
 This formula presumes that ${a \choose b} = 0$ when $b > a$ and also that $W(0)=1$.
 See sequence A355519 from \cite{OEIS}. The summand when $j=n-1$ in formula (\ref{alois}) is precisely what we used to define the numerator of $T(n)$.
 \end{remark}
 
 \begin{remark} Consider the output array for the Fibonacci sequence.  For $n=24$, one can obtain the approximate values
 $$ T(24) \approx .678$$
 $$ M(24) \approx .277$$
 $$ S(24) \approx .044.$$
 These numbers lie between the corresponding values for the Catalan output array and those of the bracket output array.
 The reason is that the growth of the Fibonacci numbers lies between the growth of the input sequences $n$ and $2^{n-1}$. \end{remark}
   
 \section{Examples of input and output sequences}

 We list some input sequences $(y_n)$ and the first few terms of (or a formula for) the output sequence $W(n)$ of row sums. We then note whether the output sequence appears in \cite{OEIS}.
 
\begin{example} We provide many input sequences, the corresponding output sequence, and whether or not the output sequence appears in \cite{OEIS}.
 \begin{enumerate}
 \item $y_n=n$. Then $W(n) = \frac{1}{n+2} {2n+2 \choose n+1}$. Sequence A00108 in \cite{OEIS}. (Catalan numbers.)
 \item $y_n = \frac{1}{n+1} {2n \choose n}$. Then $W(n) = 2,5,14,287, \dots$. Sequence NOT in \cite{OEIS}.
 \item $y_n = 2n-1$. Then $W(n) = \frac{1}{n+1}{3n+1 \choose n}$. Sequence A006013 in \cite{OEIS}.
 \item $y_n = 2n$. Then $W(n) = \frac{1} {2n+3} {3n+3 \choose n+1}$.  Sequence A001764 in \cite{OEIS}.
 \item $y_n= 3n$. Then $W(n) = \frac{1}{3n+4} {4n+4 \choose n+1} $. Sequence A002293 in \cite{OEIS}.
 \item $y_n =4n $. Then $W(n) = \frac{1}{4n+5}{5n+5 \choose n+1}$. Sequence A002294 in \cite{OEIS}.
 \item $ y_n = {n+1 \choose 2}$. Then $W(n) = 2,7,37, 268, 2496, \ldots$. Sequence  A107877 in \cite{OEIS}.
 \item $y_n = n^2$.  $W(n) = 2,9, 70,805, \ldots$. Sequence  A177450 in \cite{OEIS}.
 \item $y_n = n^2 + n$. $W(n) = 3,18,172, 2313, 40626, \dots.$ Sequence  A177447 in \cite{OEIS}.
 \item $y_n = n^3$. $W(n) = 2,17,404,20002, \ldots$. Sequence NOT in \cite{OEIS}.
 \item $y_n = (1,1,2,3,5,8,13,21, \ldots)$ is the Fibonacci sequence. Then $W(n) = 1,2,3,7,19,75,418, \ldots$. Sequence NOT in \cite{OEIS}.
 \item $y_n = 2^{n-1}$. Then $W(n) = 2, 5,19, 123, 1457, \ldots$. Sequence A355519 in \cite{OEIS}.
 \item $y_n = 3^{n-1}$. Then $W(n) = 2,7,58,1317, \ldots$. Sequence NOT in \cite{OEIS}.
 \item $y_n = (2,3,5,7,11,13,17,19, 23, 29, 31, 37, 41, 43, 47)$. (the first fifteen primes.)  Then $W(n)= 3,9, 37, 173, 1217, 7557, 60803, 419255, \ldots$. This sequence is not in \cite{OEIS}.
 \item $y_n = \lfloor{ (1.5)^n}\rfloor$. Then $W(n)=2,5,14,56,258,1803, 18352, \ldots$. This sequence is not in \cite{OEIS}.
 \item $y_n$ is the sequence defined in Example \ref{mult-sequence}. Then $W(n) = 2,5,9,23,43,70, \dots$. This sequence is not in \cite{OEIS}.
 \end{enumerate}

\end{example}

\begin{remark} It is natural to wonder whether studying the output sequence from the input sequence of primes could have any value.
The first author suspects otherwise, simply because the output sequence grows so rapidly. On the other hand, perhaps studying the numbers from Definition \ref{three-part} for this sequence could be interesting. \end{remark}

 \begin{remark} The output sequence in item (12) was entered by the first author into \cite{OEIS}
 in July, 2022. It seems that this sequence should be better known, as it counts something (tournament brackets) that is very natural. \end{remark}
 
 \begin{remark} The sequence $\lfloor{ (1.5)^n}\rfloor$ from item (15) arises in various places. It is sequence A002379 in \cite{OEIS}. This sequence could be useful in the context of this paper because
 of its similarity to the Fibonacci sequence.  \end{remark}

 \begin{definition} Recall that $\Phi$ denotes the operation of assigning an output sequence to an input sequence.
 The examples above show that $\Phi$ is not linear.  Note that
 item (2) above  can be regarded as applying $\Phi$ twice. It seems difficult to determine the range of $\Phi$, other than the inequalities from Theorem \ref{Pascal-ineq}.  \end{definition}

 We conclude with what seems to be a difficult problem.

 {\bf Open problem}: Compute the growth rate of the output sequence in terms of the growth rate of the input sequence. 
 
 \section{Acknowledgements}
 
 The authors wish to acknowledge useful discussions with Dusty Grundmeier. In particular Grundmeier has rewritten the Sage code in Mathematica, 
 which was more convenient for the first author.
 The first author especially wishes to acknowledge Alois Heinz, whom he has met only over e-mail, for his contributions to sequence A355519 in \cite{OEIS}.
 Heinz also wrote that he was looking forward to additional new sequences, and this paper provides (infinitely many) such examples. 
 Thus his encouraging e-mail provided the impetus for this paper.


\begin{thebibliography}{4}
\medskip


\medskip


\bibitem{OEIS} The On-Line Encyclopedia of Integer Sequences, \url{https://oeis.org}  .

\bibitem{D1} John P. D'Angelo, {\it Counting Tournament Brackets}, Journal of Integer Sequences, Vol. 25 (2022), Article 22.6.8. 

\bibitem{Stanley} Richard Stanley, {\it Enumerative Combinatorics}, volume 1, 2nd edition, Cambridge University Press, 2011. 

\bibitem{West} Douglas B. West, {\it Combinatorial Mathematics},  Cambridge University Press, 2022.

\end{thebibliography}
\end{document}